\newtheorem{definition}{Definition}
\newtheorem{lemma}{Lemma}
\newtheorem{corollary}{Corollary}
\newtheorem{proposition}{Proposition}
\newtheorem{remark}{Remark}
\title{\LARGE \bf
Approximate IPA: Trading Unbiasedness for Simplicity
}
\author{Y. Wardi$^\dag$\thanks{$^\dag$School of Electrical and Computer Engineering, Georgia Institute of
Technology, Atlanta, GA 30332, USA. Email: ywardi@ece.gatech.edu. Research supported in
part by  NSF under Grant CNS-1239225.} and
C.G. Cassandras$^*$\thanks{$^*$Division of Systems Engineering
and Center for Information and Systems Engineering,
Boston University,
15 St. Mary's St., Brookline, MA 02446. Email:
cgc@bu.edu. Research supported in part by NSF under Grants EFRI-0735974 and CNS-1239021, by AFOSR under Grant FA9550-12-1-0113, by ONR under Grant N00014-09-1-1051, and by ARO under Grant W911NF-11-1-0227.}
}
\begin{document}

\maketitle
\thispagestyle{empty}
\pagestyle{empty}

\begin{abstract}
When Perturbation Analysis (PA)  yields unbiased sensitivity estimators for expected-value performance functions in discrete event dynamic systems,
it can be used for performance optimization of those functions. However, when PA is known to be unbiased, the complexity
of its  estimators often does not scale  with the system's size. The purpose of this paper is to suggest an alternative approach to  optimization which balances precision with computing efforts
 by trading off complicated, unbiased PA estimators for simple, biased approximate estimators.
 Furthermore, we provide guidelines for developing such  estimators, that are largely based on
the Stochastic Flow Modeling framework. We  suggest that if the relative error (or bias) is not too large, then
optimization algorithms such as stochastic approximation
converge to a (local) minimum just like in the case where no approximation is used.
 We apply this approach to an example of
balancing loss with buffer-cost in  a finite-buffer queue, and prove a crucial upper bound on
the relative error. This paper presents the initial
study of the proposed approach, and we believe that if the idea gains traction then it may lead to
a significant expansion of the scope of PA in optimization of discrete event systems.
\end{abstract}

\section{Introduction}

Perturbation Analysis (PA) was proposed as a sample-path sensitivity-analysis technique
for performance functions defined on the state trajectories of discrete event dynamic systems, especially queueing networks
\cite{Ho91, Cassandras99}.
Two major branches of PA have evolved: Infinitesimal Perturbation Analysis (IPA), and Finite Perturbation Analysis (FPA).
IPA is suitable for situations where the sample performance functions are differentiable and it computes their
gradients, while FPA computes finite differences, and it is tailored to situations where the controlled parameter is discrete \cite{Cassandras99a}. These types of PA estimators can be
used in sample-based optimization as long as they are statistically unbiased.

The bulk of the development of PA in the past three decades has focused on IPA. However, since its inception,
IPA  has
been limited by the fact that it is unbiased only for the simplest kinds of systems, especially in the context of queueing networks  \cite{Ho91, Cassandras99}. Consequently, the main thrust of
research in PA has focused on ways to derive unbiased IPA gradient estimators. FPA has been explored
as well, with the aim of deriving exact and unbiased finite-difference estimators for classes of networks and
performance functions. Various techniques have emerged, including reparameterization of the underlying probability space
that yields unbiased IPA \cite{Gong87}, and methods of cutting-and-pasting the state trajectories for computing exact FPA estimators \cite{Ho88}.
These, however, do not scale well with the network size and typically require prohibitive computing workloads that cast doubt on
their eventual utility in applications.

Motivated by the biasedness problem in IPA, recently we explored abstractions of the event-driven dynamics into ``flows'' (e.g., fluid queues), resulting in an alternative modeling framework called {\it Stochastic Flow Models (SFM)} \cite{Cassandras02, Wardi02,Cassandras10}. Preliminary investigations indicated that in this framework
IPA is unbiased in a far-larger class of systems than in the traditional queueing setting, and its gradient
estimators often admit very-simple algorithms. Furthermore, the following observation was made
from empirical simulation results \cite{Cassandras02, Cassandras06}: Used in conjunction with gradient-optimization
  methods, IPA gradients that are derived from an SFM  can be applied successfully to sample paths of discrete event systems.
This point, explained in detail in the sequel, supports the use of SFM-derived IPA algorithms for optimizing
  discrete-event models, although the IPA derived from the latter models are biased.

This observation raises the following question: given an optimization problem
on a discrete-queueing model, when can we trust the result of an optimization algorithm that applies SFM-based IPA
to the sample paths of the discrete system? A related question concerns the special case of optimization with respect to
discrete parameters: When can we use (successfully)
a gradient-descent algorithm with SFM-based IPA?
Answers to these two questions can have practical implications if the
IPA gradients that are obtained from the SFM can be computed via very-simple algorithms.

The purpose of this paper is to present an initial investigation of the above questions. Following a general discussion of
the underlying ideas, the paper analyzes a test-case example consisting of the
loss-volume and buffer-cost    in a finite-buffer queue, as  functions of the buffer size.
 Section II presents the problem in a formal setting and recounts some background material.
Section III  analyzes the aforementioned example, and  Section IV provides simulation results.
Finally,
Section V
concludes the paper and points out directions for future research.

\section{Motivation, Problem Setting, and Preliminary Results}
Let $L(\theta):R^n\rightarrow R$ be a random function defined on a suitable probability space
$(\Omega,{\cal F},P)$, and let $\ell(\theta):=E\big(L(\theta)\big)$ be the associated expected-value function.
In situations where the gradient term $\nabla\ell(\theta)$ is sought but cannot be computed analytically,
it can be estimated by the sample gradient $\nabla L(\theta)$ or averages of independent realizations thereof.
In the setting of Discrete Event Dynamic Systems (DEDS), and especially queueing networks,
IPA often provides simple algorithms for computing the sample
 gradient $\nabla L(\theta)$ \cite{Ho91,Cassandras99}, and  accordingly this sample gradient is called the {\it IPA gradient}.

Throughout the development of the field of Perturbation Analysis (PA) it was thought that for the sample
gradient $\nabla L(\theta)$ to be useful it had to be an unbiased statistical estimator
of $\nabla\ell(\theta)$, i.e., $E\big(\nabla L(\theta)\big)=\nabla\ell(\theta)$. Since
$\ell(\theta)=E\big(L(\theta)\big)$, unbiasedness amounts to the interchangeability
of  expectation and differentiation with respect to $\theta$, and therefore, a closely-related condition
is that the random function $L(\theta)$ be
continuous w.p.1. However, in all but simple systems defined on
queueing networks, $L(\theta)$ is not continuous and its IPA gradient is biased \cite{Ho91}.
One way to get around this problem is to use the SFM framework where, for a large class of systems, the IPA gradient is
both unbiased and admits fairly simple formulas and algorithms.\footnote{Of course SFMs provide primary models for flow networks
in many systems of interest,
but in this paper we consider them as abstractions used for the purpose of  approximating DEDS.}

For example, consider a finite-buffer queue driven by the processes of arrival and service times, where jobs arriving at a full queue
 are being discarded. Suppose that  each job has a measure of quantity that is proportional to the amount of buffer it
 occupies as well as to its service time.
 Furthermore, each job is admitted to the buffer if and only if there is sufficient space to
 store it,
 and is wholly discarded otherwise.
 Suppose that the queue evolves over  a given finite time-horizon
 $[0,t_{f}]$, and consider the volume of job-quantities being lost  during that interval as a function of the buffer size.
 Thus, denoting by $\theta$ the size of the buffer,  we consider $L(\theta)$ to be the sum of the  job-quantities  being lost,
 and refer to it as the loss volume. Observe that
 $L(\theta)$ is a step function and hence the sample derivative $L^{'}(\theta)$ has the value 0 unless
 $\theta$ is jump point of $L(\cdot)$.\footnote{We use the  notation
  $L^{'}(\theta)$ for the gradient $\nabla L(\theta)$ when $\theta\in R$, and call it the
  {\it IPA derivative}.} Assuming statistical mixing, e.g., interarrival times having a density function, the probability
 that a jump in $L(\cdot)$ occurs at a given $\theta>0$ is 0. Consequently, considering a sample path at a given $\theta$,
 the IPA derivative is $L^{'}(\theta)=0$ w.p.1.  On the other hand, the expected-value function $\ell(\theta)=E\big(L(\theta)\big)$
 often is  differentiable
 and monotone-decreasing, and  hence $\ell^{'}(\theta)<0$. Thus, the IPA derivative $L^{'}(\theta)$ is
 biased and would be useless in optimization since it always yields 0.

As an abstraction of the above system,
consider the SFM  shown in Figure 1. It consists of a fluid queue with a random server's-rate process indicated by $\{\beta(t)\}$,
 and its inflow-rate process is denoted by $\{\alpha(t)\}$. The buffer size is denoted by $c$, the fluid-volume  at the buffer (workload)
 is $x(t)$, and the spillover rate
 due to full-buffer is $\gamma(t)$. We can view  $\alpha(t)$ and $\beta(t)$ as random functions of $t$, and require them to
 be piecewise continuous on the interval $[0,t_{f}]$. These functions  drive the  other queueing processes via the
 following flow equations,
 \begin{equation}
 \dot{x}\ =\ \left\{
 \begin{array}{ll}
 0, & {\rm if}\  x(t)=0\ {\rm and}\ \alpha(t)\leq\beta(t)\\
 0, & {\rm if}\  x(t)=c\ {\rm and}\ \alpha(t)\geq\beta(t)\\
 \alpha(t)-\beta(t), & {\rm otherwise},
 \end{array}
 \right.
 \end{equation}
 and
 \begin{equation}
 \gamma(t)\ =\ \left\{
 \begin{array}{ll}
 \alpha(t)-\beta(t), & {\rm if}\ x(t)=c\\
 0, & {\rm otherwise}.
 \end{array}
 \right.
 \end{equation}
 Let $\theta:=c$ be the variable parameter, and consider the sample-performance function, denoted by $L_{c}(\theta)$,
  to be the loss volume over the
 interval $[0,t_{f}]$, namely
 \begin{equation}
 L_{c}(\theta)\ :=\ \int_{0}^{t_{f}}\gamma(\theta,t)dt,
 \end{equation}
 where the dependence of the notation $\gamma(\theta,t)$ on $\theta$ highlights that fact
 that the overflow process is a function of
 $\theta$.
 Note that $L_{c}(\theta)$  can serve  as an approximation to $L(\theta)$, defined above  as the  sum
of the job-quantities being lost in the  discrete-queue setting,
 but unlike the latter, its IPA derivative is unbiased.
Furthermore, $L_{c}^{'}(\theta)$ has an extremely simple formula,
 defined as follows. Denote by $\{x(\theta,t)\}$ the buffer-workload process defined by Equation (1), and consider a sample-realization
 of
$x(\theta,t)$ for a given $\theta$ and for all $t\in[0,t_{f}]$,   as
 shown in Figure 2. We say that an associated busy period is {\it lossy} if it incurs loss during some (any) open subset thereof. Let $N$ be the (sample-dependent) number of lossy busy periods in the interval $[0,t_{f}]$. Then (see \cite{Cassandras02, Wardi02}),
 \begin{equation}
 L_{c}^{'}(\theta)\ =\ -N.
 \end{equation}
 For example, in Figure 2, $L_{c}^{'}(\theta)=-2$. This formula indicates that to compute the IPA
 derivative, all that is needed is a simple counting process.
 The simplicity of this formula was somewhat surprising since we say nothing about the processes $\{\alpha(t)\}$ or $\{\beta(t)\}$ other
 than they be piecewise continuous w.p.1.

\begin{figure}
\centering
\epsfig{file=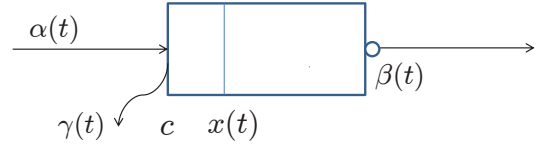,width=3.1in}\\
 \caption{Basic SFM}
\end{figure}

\begin{figure}
\centering
\epsfig{file=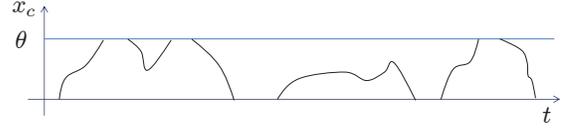,width=3.1in}\\
 \caption{Graph of $x_{c}(t)$}
\end{figure}

 The Right-hand Side (RHS) of Equation (4) can be applied to gauge
 the sensitivity of $J$ not only in the SFM context, but in the discrete-queue
  setting as well. After all, this formula is independent of whether the queue is continuous or discrete. Simulation results
   indicated that it gives
  a good approximation of $\ell^{'}(\theta)$ in the discrete case as long as it
  is approximated well by the SFM.
  Recalling  that the IPA derivative in the discrete model gives 0,  the corresponding  estimator defined by the RHS of (4)
  is clearly better. The question is, can we use it effectively in optimizing the discrete model?
  If this answer is `yes' then we have the following situation: To optimize performance of the discrete model we cannot use
  IPA derived from it, but we can  apply to its sample paths
  IPA derivatives that are based on an analysis of the SFM model. In short, the formula that is derived from the  SFM
  is applied to sample paths of the discrete model.
  The purpose of this paper is to initiate a study of when this is possible.

   Let us take the above example a step further. Consider the discrete model where  each job fits in a single
   storage cell. Suppose,  for simplicity of the argument,   that the service-time process is deterministic, and let  $s$
   denote its given constant value. Furthermore,
   let $\theta$ be the buffer size, $c$, an integer, and let $L(\theta)$ be the loss volume over the horizon
   $[0,t_{f}]$, namely, $s$ times the number of  jobs lost during the that horizon.
   Unlike the previous example $\theta$ is a discrete parameter and hence it makes no sense to
   talk about an IPA gradient. However,
  we can ask  whether the SFM IPA estimator, given by Equation (4) (multiplied by $s$),
    can serve to approximate a finite-difference
   term of the discrete model.
   Simulation results reported on in \cite{Cassandras02,Wardi02} suggest the answer can be affirmative,
   and this
   serves to motivate the present paper.

Consider now the following more-abstract setting.  Let   $L(\theta):R^n\rightarrow R$ be a random function
defined on a DEDS,
$\theta\in R^n$,
 let $\ell(\theta):=E\big(L(\theta)\big)$ denote its
expected value, and suppose that it is desirable to minimize $\ell(\theta)$.
Consider first the case where $\theta$ is a continuous variable and $L(\theta)$ is differentiable. Suppose that
$\nabla L(\theta)$ is an unbiased IPA estimator of $\nabla\ell(\theta)$, but its computation is
complicated and time consuming. The problem is how to find a random vector $h(\theta)\in R^n$ satisfying the following
two conditions: (i) $h(\theta)$ can be computed easily, and (ii) the term $||h(\theta)-\nabla L(\theta)||$ is
small enough so that stochastic-approximation algorithms that would converge (to a local minimum for $\ell$)
with the descent direction $-\nabla L(\theta)$, would
also converge with the direction $-h(\theta)$. One measure of such approximation is the relative error,
defined via
\begin{equation}
\varepsilon(\theta)\ :=\ \frac{1}{||\nabla\ell(\theta)||}||E\big(h(\theta)\big)-\nabla\ell(\theta)||,
\end{equation}
and ideally we would like to have the condition $\varepsilon(\theta)\leq\alpha$ for a given $\alpha\in(0,1)$, and for
all $\theta$. To see the reason for this, consider the deterministic case where $h(\theta)=E\big(h(\theta)\big)$. Now the inequality
$\varepsilon(\theta)\leq\alpha$ (for some $\alpha\in(0,1),\ \forall\theta$) implies  (with the aid of the triangle inequality)
that
\begin{equation}
\langle h(\theta),\nabla\ell(\theta)\rangle\geq(1-\alpha)||\nabla\ell(\theta)||^2
\end{equation}
and hence $-h(\theta)$ is a descent
direction for $\ell$, and in fact, gradient-descent algorithms with this direction would retain their essential convergence
properties \cite{Polak97}.
 In the stochastic case, asymptotic convergence of algorithms with the random
direction $-h(\theta)$ would be maintained as long as the iteration-sequence  satisfies,  asymptotically, the
limiting differential equation $\dot{\theta}=-E\big(h(\theta)\big)$. Now we mention that it may not be possible to compute
 $\varepsilon(\theta)$ since it involves expected-value quantities  which are unknown, but it can be replaced
by the sample-path relative error, defined via
\begin{equation}
{\cal E}(\theta)\ :=\ \frac{1}{||\nabla L(\theta)||}||h(\theta-\nabla L(\theta)||;
\end{equation}
in the  scalar  case where $\theta\in R$, the condition ${\cal E}(\theta)\leq\alpha$ w.p.1  implies the inequality
 $\varepsilon(\theta)\leq\alpha$ under weak assumptions, while in the vector case,
it suffices to ascertain such a sample inequality  for the partial derivatives along each one of the coordinates of $\theta$.
Finally, we mention that in some situations (but not all) the condition ${\cal E}(\theta)\leq\alpha$ may be more natural
when $||\nabla L(\theta)||$ is large rather than  small. However,   if this condition applies   whenever
$||\nabla L(\theta)||>\epsilon$ for some $\epsilon>0$, then an algorithm would converge (under suitable assumptions)
towards a set where $||\nabla\ell(\theta)||$ is bounded from above by an $\epsilon$-related quantity.
This has the practical implication of computing a parameter-point within certain
 bounds from an optimum  by an algorithm
that is not proven to compute an optimal parameter value.

Consider next the case where $\theta$ is a discrete parameter having values in a countable subset
of $R^n$. Obviously the gradient $\nabla L(\theta)$ does not exist, and an optimization algorithm could compute
$\theta_{i+1}$ from $\theta_{i}$ by examining various points $\theta$ in a neighborhood of $\theta_{i}$, and choosing the one
yielding the largest descent in $L$. To this end it is natural to use FPA for concurrent estimation
 \cite{Cassandras99a},
but this typically is a complicated and time-consuming procedure.  However, there often exists
a natural relaxation of the underlying
DEDS by an SFM that yields an approximation of $L(\theta)$ via a continuous-parameter function, $L_{c}(\theta)$;
see  \cite{Yao11,Yao12}.  Furthermore, as in the example mentioned earlier, the IPA gradient
$\nabla L_{c}(\theta)$ often is  computable via  a simple algorithm, and the idea is to
optimize $\ell(\theta)$ by using  a stochastic approximation algorithm of the form $\theta_{i+1}=\theta_{i}-\lambda_{i}\nabla L_{c}(\theta_{i})$,
$\lambda_{i}>0$. In other words, the descent direction of $L$ is determined by the IPA derivative of the SFM.
This approach is justified as long as the difference term
$\Delta L(\theta):=L(\theta+\Delta\theta)-L(\theta)$ is approximated well by the term
$\langle\nabla L_{c}(\theta),\Delta\theta\rangle$ for a small-enough $\Delta\theta$.

This is the case that is exemplified and analyzed in the rest of the paper. We mention that we are not concerned here with theoretical
issues related to asymptotic convergence of stochastic approximation, including the question of when the computed iteration-sequence
satisfies the limiting ODE. We focus only on an example where we establish the proximity of  the sample-terms
$\langle L_{c}^{'}(\theta),\Delta\theta\rangle$ and $\Delta L(\theta):=L(\theta+\Delta\theta)-L(\theta)$, which supports
the convergence of a stochastic approximation algorithm that runs on a discrete parameter space of a DEDS while using the IPA formula
derived from its SFM abstraction. This is but a first study of the general approach described above, and we will argue that
its results merit further investigations.

\section{case Study: $G/D/1/k$ Queue}
This section concerns sensitivity estimation of the loss volume
   in a $G/D/1/k$ queue with respect to variations in the buffer size, $k$. This  performance function is related to the
loss probability; see \cite{Cassandras02}.
Suppose that the service order of jobs is according to their arrival order,  denote by $s$ the (constant) service time
of each job, and suppose that each job requires one buffer unit for its storage.  We point out that the assumption
of deterministic service times can be relaxed in two ways: one assumes a continuous buffer size in which every job
is allocated an amount of buffer that is proportional to its service time, and the other assumes that each job is stored in a single
buffer unit regardless of its service time. In both cases the essential elements of the analysis in the sequel remain
unchanged but the technicalities increase in complexity, and for this reason we make the assumption of a  constant service time,
which suffices to capture the gist of the method that we propose.

Consider the sample-path evolution of the queue during a given time-interval $[0,t_{f}]$ as a function of the buffer size $k$. This
means that a given $k$ is fixed throughout the above time-interval. Suppose, for the sake of simplicity, that
the queue is empty at time $t=0$. Let $n(k)$ denote the number of jobs lost due to
 a full  buffer in the above time-interval,
and let $x(k,t)$ denote the number of jobs at the queue (server plus buffer) at time $t$. We will consider the loss-volume performance function throughout the horizon $[0,t_{f}]$, defined by
$L(k):=n(k)\times s$.

Consider now an SFM analogue of the queue, in which the buffer capacity is a continuous variable, denoted by $\theta$,  the arrival-rate
and service-rate processes are $\{\alpha(t)\}$ and $\{\beta(t)\}$, respectively, and the workload $x_{c}(\theta,t)$
and spillover rate $\gamma_{c}(x,t)$  are defined
by Equations (1) and (2). The SFM-based performance function that is  analogous to $L(k)$ will be denoted by $L_{c}(\theta)$,
and to align it with the discrete model, where each job carries a workload of $s$, we define it as
$L_{c}(\theta)=s\int_{0}^{t_{f}}\gamma_{c}(\theta,t)dt$.
Notice that for $\theta=k$, it is not generally true that  $L_{c}(\theta)=L(k)$, since the discrete model accepts or rejects whole
 jobs  only, while the continuous-flow model permits the storage of any fluid volume. Therefore,  $s^{-1}L(k)$ has only
integer values while $s^{-1}L_{c}(\theta)$, $\theta=k$, generally has non-integer values.
 Thus, the approximation of the SFM to the discrete setting is in the model and not merely in
the performance function.

To speak of such approximations suggests that we have to define the SFM model in detail.
However, this is not quite the case. The reason is that what we need is not the function-value
$L_{c}(\theta)$ but only its IPA  derivative
$L_{c}^{'}(\theta)$, which is computable by a formula that does not depend on  the detailed model but
relies   on
a sample path of the discrete system. To clarify this point, observe that the IPA formula in (4) is
 {\it independent} of the particular forms of $\alpha(t)$ or
$\beta(t)$ though its implementation depends on the sample path, which can be obtained from the discrete system
as well as from the continuous one. Thus, when using the term $L_{c}^{'}(\theta)$ we mean the IPA derivative, derived
from the SFM model but applied to the sample path of the discrete model at a point $\theta=k$, an integer.

Consider the sensitivity measure $\Delta L(k):=L(k+1)-L(k)$, defined over the common sample path underscoring
the realizations of both $L(k+1)$ and $L(k)$.
The purpose of the IPA derivative $L_{c}^{'}(\theta)$, $\theta=k$,
 is to approximate this  finite-difference term
via the first-order approximation term
$L_{c}^{'}(\theta)\Delta\theta$, and since $\Delta\theta=1$, this   is equal to $L_{c}^{'}(\theta)$.
Specifically, we will use the notation $L_{c}^{'}(k)$ to indicate the IPA derivative $L_{c}^{'}(\theta)$
at the point $\theta=k$, computed from a
sample path that is obtained from the  discrete-event  model.
For the purpose of optimization, this can be used in a variant of a stochastic approximation algorithm
with roundoffs, as described in Section IV. What concerns us in the rest of this section is the error
term
$E(k):=|\Delta L(k)-L_{c}^{'}(k)|$, and we derive for it an upper bound in the sequel.

Fix $k\geq 1$, and consider the process $x(k,t)$, $t\in[0,t_{f}]$, corresponding to a sample path of the
discrete queue. Following standard terminology, we call  this process a {\it nominal trajectory}
 and the process $x(k+1,t)$ corresponding to the same sample path, the
 {\it perturbed trajectory} (see \cite{Ho91}).   PA yields the difference-process  $\Delta x(k,t):=x(k+1,t)-x(k,t)$ from the nominal trajectory, and in order to describe its algorithm we  define (below) two types of events: one is the occurrence
 of conditions where $\Delta x(k,t)$ increases from 0 to 1, and the other is the start of a busy
 period according to the nominal trajectory. The significance of these events will be made clear following their
 formal definition, which requires  two auxiliary variables:
 a binary variable $\psi \in \{0,1\}$, and
a real variable $\zeta \in [0,s]$. These are defined together with the event types in the following recursive manner.

Given a buffer size $k>0$ and
 a sample path of the system, set
$\psi:=0$ and $\zeta:=s$ at time $t=0$.
\begin{definition}
\begin{enumerate}
\item
A type-1 event is the arrival time of a job, $t_a$, when
(i) $x(k,t_a^-) = k$; (ii) $\psi = 0$; and (iii)  with $t_{0}$ denoting the service-starting time of the job currently in the server at time
$t_{a}$, $\zeta > s - (t_a-t_0)\geq0$.
 When a type-1 event occurs at time $t_{a}$, set $\psi=1$ and set $\zeta=0$.
\item
A type-2 event is the arrival time of a job, $t_{b}$, when $x(k,t_{b}^-)=0$.   When such an event occurs at time $t_{b}$, set
 $\psi=0$,
 and, with  $\tau_{e}$ denoting the time the previous busy period ended, set
\begin{equation}
\zeta\ =\ \min\{t_{b}-\tau_{e}+\zeta,s\}.
\end{equation}
\end{enumerate}
\end{definition}
\hfill$\Box$

Let us denote by $\zeta(t)$ and $\psi(t)$ the values of the variables $\zeta$ and $\psi$ at time $t$, as computed by the processes described in Definition 1.
\begin{remark}
Observe that type-1 events occur when a job is turned away from the queue due to a full buffer, according to the nominal trajectory,
subject to the three conditions specified in Definition 1.1; later we will show that these conditions guarantee that type-1 events occur when
$\Delta k(t_{a})$ is increased from 0 to 1. A type-2 event is the start of a busy period according to the nominal trajectory.

The term $\zeta$ represents the delay of the service-schedule of the nominal trajectory with respect to the perturbed trajectory during
periods when $\Delta x(k,t)=1$. To see this, note that initially $\Delta x(k,t)=0$
until the first time a job is turned away according to the nominal
trajectory. This job is absorbed  by the extra buffer
according to the perturbed trajectory; henceforth $\Delta x(k,t)=1$ while the service schedule of the two trajectories are aligned, until
the queue becomes empty according to the nominal trajectory. To formalize this,  denote by $t_{a,1}$ the time the first job is discarded according to the nominal
trajectory,  and let $\tau_{e,1}>t_{a}$ denote  the end-time of the first busy period according to the nominal
trajectory.
Note that $t_{a,1}$ is the time of a type-1 event, and hence,   by Definition 1, $\zeta(t)=0$ $\forall t\in[t_{a,1},\tau_{e,1})$; furthermore,
as just explained, $\Delta x(k,t)=1$
$\forall t\in[t_{a,1},\tau_{e,1})$.

Next, let $t_{b,1}>\tau_{e,1}$ be the time of the next job-arrival after $\tau_{e,1}$, namely the end-time
of the idle period begun at time $\tau_{e,1}$ for the nominal trajectory.
The term $t_{b,1}-\tau_{e,1}$ is the length of that idle
period. If $t_{b,1}-\tau_{e,1}\geq s$ then the extra job of the perturbed trajectory is being served during this idle period, and  at time
$t_{b,1}$ a new busy period starts for both trajectories. On the other hand, if $t_{b,1}-\tau_{e,1}<s$ then the extra job of the perturbed
trajectory
starts its service at time $\tau_{e,1}$ but does not complete it by the time the next job arrives at time $t_{b,1}$;
the period
$[\tau_{e,1},t_{b,1})$ is idle only for the nominal trajectory but not for the perturbed
one. In this case, Equation (8) sets, at time $t_{b,1}$, $\zeta(t_{b,1})=t_{b,1}-\tau_{e,1}$
(since $\zeta(t_{b,1}^{-})=0)$.  Henceforth, until either another type-1 event occurs or the buffer becomes
empty according to the nominal trajectory, the value of $\zeta$ remains unchanged, and  in every
service period (according to the nominal trajectory), the perturbed trajectory completes the service of
that job during the first $s-\zeta$ seconds, and  it
starts serving the next job during the remaining $\zeta$ seconds, while
the nominal trajectory starts the service of the same next job $\zeta$ seconds later. In other words,
if we denote by $[t,t+s)$ the service time of a job according to the nominal trajectory, then during the time-period
$[t,t+s-\zeta)$ the perturbed trajectory completes the service of this  job and it starts serving the next job at time
$t+s-\zeta$; the nominal trajectory will start serving its next job at time $t+s$. All of this will change with the next event according to its type.
If the next event is type 1 then $\zeta$ is set to 0 and the extra job of the perturbed trajectory is absorbed by the extra buffer,
 while if the next event is of type 2, then $\zeta$ is recomputed by (8).
\end{remark}

The implications of all of this on the process $\{\Delta x(k,t)\}$
 are summarized in the following assertion.

\begin{lemma}
\begin{enumerate}
\item
Let $t_{a}$ be the time of a type-1 event, and let $\tau_{e}>t_{a}$ be the end-time of the busy period (according to the nominal
trajectory) containing $t_{a}$.
Then for every $t\in[t_{a},\tau_{e})$, $\Delta x(k,t)=1$.
\item
Let $t_{b}$ be the time of a type-2 event, and let $t_{a}>t_{b}$ be the time of the next type-1 event.
Let $[\tau,\tau+s)\subset [t_{b},t_{a})$ be a service period according to the nominal trajectory.
Then,  $\Delta x(k,t)=1\  \forall t\in[\tau,\tau+s-\zeta(t))$, and $\Delta x(k,t)=0\ \forall t\in[\tau+s-\zeta(t),\tau+s)$.
\end{enumerate}
\end{lemma}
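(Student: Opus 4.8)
The plan is to prove both parts simultaneously by induction on the time-ordered sequence of type-1 and type-2 events, maintaining a coupling invariant that ties the auxiliary variables $\psi$ and $\zeta$ to the true behavior of $\Delta x(k,t)$. The coupling is the natural one: the nominal trajectory $x(k,t)$ and the perturbed trajectory $x(k+1,t)$ are driven by the \emph{same} arrival stream, serve in FIFO order, and share the constant service time $s$; before the first type-1 event no loss has yet occurred, so the two trajectories are identical and $\Delta x\equiv 0$ (the base case). First I would record two elementary consequences of the coupling used throughout: (a) $0\le\Delta x(k,t)\le 1$ for all $t$, and (b) whenever the nominal buffer is full at an arrival instant $t$, the perturbed system has room for the arriving job if and only if $\Delta x(k,t^-)=0$. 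Claim (a) holds because the perturbed system accepts a superset of the jobs accepted by the nominal one (so $\Delta x\ge 0$), while if $\Delta x=1$ and the nominal buffer is full then the perturbed buffer holds $k+1$ jobs and is also full, so a further arrival is rejected by both and $\Delta x$ cannot reach $2$. Claim (b) is the quantitative version: since $x(k,t^-)=k$, the perturbed system has room exactly when $x(k+1,t^-)\le k$, i.e. when $\Delta x(k,t^-)=0$.

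Next I would verify that conditions (ii)--(iii) in Definition 1.1 detect precisely the instants at which $\Delta x$ jumps from $0$ to $1$. Condition (ii), $\psi=0$, excludes the synchronized regime of Part 1, in which the perturbed buffer already carries the surplus job (there a further full-buffer arrival is rejected by both systems, leaving $\Delta x=1$ unchanged). Condition (iii) is the decisive one: writing $[t_0,t_0+s)$ for the nominal service period in progress at $t_a$, the elapsed service time is $t_a-t_0$, so $s-(t_a-t_0)$ is the residual service time, and the inequality $\zeta>s-(t_a-t_0)\ge 0$ states exactly that $t_a$ lies in the final sub-interval $[\,t_0+s-\zeta,\,t_0+s\,)$ of that period. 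By the induction hypothesis (the Part-2 structure, including the synchronized case $\zeta=s$) this is precisely the sub-interval on which $\Delta x=0$, so by claim (b) the perturbed system has room and absorbs the arriving job, producing the jump $\Delta x:0\to 1$. I expect this to be the crux, since it is where the otherwise opaque third condition acquires its meaning and where the two regimes interlock.

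With these facts in hand, Part 1 follows by propagating the jump forward. At a type-1 event $t_a$ the surplus job enters the perturbed buffer and $\psi,\zeta$ are reset to $1,0$; on $[t_a,\tau_e)$ both trajectories are continuously busy and, by the common service discipline, complete jobs in lockstep, so the single surplus job remains in the perturbed buffer and $\Delta x(k,t)\equiv 1$ until the nominal trajectory empties at $\tau_e$. For Part 2 I would analyze the regime entered at a type-2 event $t_b$, where $\psi$ is reset to $0$ and $\zeta$ is updated by Equation (8). In this regime the perturbed system still carries one surplus job, but because that surplus was partially served during the nominal idle period preceding $t_b$, the two schedules are now offset: in each nominal service period $[\tau,\tau+s)\subset[t_b,t_a)$ the perturbed server completes its current job at $\tau+s-\zeta$, i.e. $\zeta$ seconds before the nominal server completes its own. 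Hence the perturbed count exceeds the nominal one on $[\tau,\tau+s-\zeta)$, where $\Delta x(k,t)=1$, and the counts agree on $[\tau+s-\zeta,\tau+s)$, where $\Delta x(k,t)=0$. Since no type-1 event occurs on $[t_b,t_a)$, $\zeta(t)$ is constant there and this split recurs identically in every such period, which is the assertion of Part 2.

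The remaining work, and the main bookkeeping obstacle, is closing the induction across busy-period boundaries, where I must check that Equation (8) reproduces the true offset after each idle period. The point requiring care is the accumulation and the $\min\{\cdot,s\}$ cap: each nominal idle period of length $t_b-\tau_e$ lets the perturbed server advance on its surplus workload, so $\zeta$ grows toward $s$; once the cumulative idle time suffices for the perturbed server to dispose of the surplus entirely, the cap forces $\zeta=s$, the two trajectories re-synchronize, and Part 2 degenerates to $\Delta x\equiv 0$ (the sub-interval $[\tau,\tau+s-\zeta)=[\tau,\tau)$ being empty), consistent with the perturbation having been resolved. Verifying that the update in Equation (8) tracks the offset correctly through every idle period---including several consecutive short busy periods before the next type-1 event, over which $\zeta$ accumulates monotonically toward $s$---completes the induction and hence the proof.
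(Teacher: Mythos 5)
Your proof is correct and follows essentially the same route as the paper's: an induction over the event sequence of the coupled nominal/perturbed trajectories, showing that type-1 events are exactly the instants where $\Delta x(k,\cdot)$ jumps from $0$ to $1$ (condition (iii) of Definition 1.1 placing the arrival in the $\Delta x=0$ sub-interval so the perturbed queue absorbs the job), that losses occurring while $\Delta x=1$ are common to both trajectories, and that Equation (8) tracks the service offset $\zeta$ across idle periods. The only blemish is the sentence asserting that $\zeta(t)$ is constant on all of $[t_b,t_a)$ --- it is constant only between consecutive events and is updated at every intermediate type-2 event --- but your final paragraph treats exactly this accumulation (including the cap at $s$ and re-synchronization), so the argument stands as the paper's does.
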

\begin{proof}
The main argument is by induction. Let $t_{a,i}$, $i=1,2,\ldots$,
denote the $ith$ time
a type-1 event occurs. Furthermore, let $\tau_{e,i}>t_{a,i}$ denote the end-time of the busy period
containing $t_{a,1}$ according to the nominal trajectory. By the assignments of the values
of the binary variable $\psi$ in Definition 1, it follows that
$t_{a,i+1}>\tau_{e,i}$; in other words, the next type-1 event following
$t_{a,i}$ can occur only after the buffer becomes empty. Let $t_{b}$ be any
type-2 event-time between $\tau_{e,i}$ and $t_{a,i+1}$, and note that there may be
several such events, since there may be multiple empty periods between the times
$\tau_{e,i}$ and $t_{a,i+1}$.

For $i=1$, the detailed remarks prior to the statement of this lemma showed that
$\Delta x(k,t_{a,1}^{+})=1$. Next, suppose that, for some $i\geq 1$,
 $\Delta x(k,t_{a,i}^{+})=1$.
We will prove the assertions of the lemma
for $[t_{a,i},\tau_{e,i})$ (part 1) and for $[t_{b},t_{a,i+1})$ (part 2), as well as that
$\Delta x(k,t_{a,i+1}^{+})=1$; this will complete the proof.

By the induction's hypothesis, $\Delta x(k,t_{a,i}^{+})=1$, and therefore, it
is clear that, for all $t>t_{a,i}$ until the buffer becomes empty or full
(according to the nominal trajectory), $\Delta x(k,t)=1$. Suppose that the buffer becomes
full before it becomes empty, namely at some  time $\bar{t}\in(t_{a,i},\tau_{e,i})$.
Then, at that time, the buffer is becoming full according to both nominal and perturbed
trajectories,  the full-buffer period starting at that time is common to
both trajectories, and the relation
 $\Delta x(k,t)=1$  remains  throughout it. Thus, this relation
 would be maintained until
 the next time the buffer becomes empty according to the nominal trajectory, namely
 time $\tau_{e,i}$. This proves part 1 of the lemma.

 Next, consider what happens at time $\tau_{e,i}$, when the buffer
 becomes empty according to the nominal trajectory. At this time $\zeta$ is computed by
 Equation (8), and as discussed earlier, for every service-period of the nominal
 trajectory, $[t,t+s)$, that occurs before the next time the buffer becomes
  empty or a loss takes place, $\Delta x(k,\tau)=1$ $\forall\tau[t,t+s-\zeta(t))$,
 and $\Delta x(k,\tau)=0$ $\forall\tau\in[t+s-\zeta,t+s)$. This situation
 will not change if  the buffer becomes empty before time $t_{a,i+1}$, except that
 $\zeta$ would be recomputed via  Equation (8) (and possibly reduced).
 Next, if a loss takes place at a time $\tilde{t}$ prior to $t_{a,i+1}$ then,
 by Definition 1.1, this occurs during the first $s-\zeta$ seconds of the current service period; $\Delta x(k,\tilde{t})=1$ during that service period, and hence the loss occurs for both the nominal and perturbed trajectories, and the above relations
 concerning $\Delta x(k,\cdot)$  remain unchanged. On the other hand, at time
 $t_{a,i+1}$, the loss associated with the nominal trajectory occurs during the
 the last $\zeta$ second of the current service period; during that time
 $\Delta x(k,\cdot)=0$, and hence $\Delta x(k,t_{a,i+1}^{+})=1$. This completes the proof.
\end{proof}

We next consider the IPA approximation of the finite-difference term $\Delta L(k)$.
Recall that the loss volume is $L(k):=n(k)\times s$ where $n(k)$ is the number of jobs lost during the interval $[0,t_{f}]$,
while in the SFM setting $L_{c}(\theta)=s\int_{0}^{t_{f}}\gamma_{c}(\theta,t)dt$. By
Equation (4),
the IPA derivative of the analogous SFM is $L_{c}^{'}(k)=-Ns$ where $N$ is the number of lossy busy periods obtained from the nominal trajectory of the discrete queue.
Recall that   $E(k):=|\Delta L(k)-L_{c}^{'}(k)|$. We are interested in an upper bound on
$E(k)$ since it will yield an upper bound on the relative error.

Consider a particular sample path, and define
 the two quantities, $N_{s}$ and $N_{\ell}$, as follows.
$N_{s}$ is  the number of lossy busy periods in the horizon $[0,t_{f}]$ whose preceding idle periods are shorter than
$s$ seconds, and   $N_{\ell}$ is  the number of lossy busy periods whose preceding idle periods are at least as long as $s$ seconds.
Naturally   $N_{s}+N_{\ell}=N$.

The main result of this subsection is the following.

\begin{proposition}
The following inequality is in force,
\begin{equation}
E(k)\ \leq\ sN_{s}.
\end{equation}
\end{proposition}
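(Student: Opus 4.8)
The plan is to reduce (9) to a counting statement about \emph{saved jobs}. Let $\sigma(k):=n(k)-n(k+1)\ge 0$ denote the number of jobs that are discarded under the nominal buffer $k$ but retained under the perturbed buffer $k+1$ on the common sample path. Since $L(k)=n(k)\,s$ and, by the text, $L_{c}^{'}(k)=-Ns$, we have $\Delta L(k)=-s\,\sigma(k)$, and therefore
\begin{equation}
E(k)\ =\ \big|{-s}\,\sigma(k)+Ns\big|\ =\ s\,\big|N-\sigma(k)\big|.
\end{equation}
Hence (9) is equivalent to the two-sided estimate $N_{\ell}\le\sigma(k)\le N$, i.e. $0\le N-\sigma(k)\le N_{s}$, which is what I would establish.

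First I would identify $\sigma(k)$ with the number of type-1 events. A job arriving at $t_{a}$ and lost by the nominal trajectory ($x(k,t_{a}^{-})=k$) is saved precisely when it is \emph{not} lost by the perturbed trajectory, i.e. when $x(k+1,t_{a}^{-})=x(k,t_{a}^{-})+\Delta x(k,t_{a}^{-})<k+1$, equivalently $\Delta x(k,t_{a}^{-})=0$. Before any type-1 event within the current busy period one has $\psi=0$ (it is set to $0$ at the type-2 event that opens the busy period and is not changed until a type-1 event), and part 2 of the Lemma gives $\Delta x(k,t_{a}^{-})=0$ iff $t_{a}-t_{0}>s-\zeta$, i.e. iff $\zeta>s-(t_{a}-t_{0})$ --- which is exactly condition (iii) of Definition 1.1. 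Thus a saved loss coincides with a type-1 event. By part 1 of the Lemma together with the update $\psi:=1$ at each type-1 event, a busy period contains \emph{at most one} type-1 event: after it, $\Delta x(k,\cdot)=1$ for the rest of the busy period, so every subsequent loss is lost under both trajectories and saves nothing. Consequently $\sigma(k)$ equals the number of busy periods containing a type-1 event, and since every such busy period is lossy, $\sigma(k)\le N$.

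For the lower bound I would show that every lossy busy period whose preceding idle period has length at least $s$ contains a type-1 event. At the type-2 event opening such a busy period, Equation (8) forces $\zeta=\min\{t_{b}-\tau_{e}+\zeta,\,s\}=s$ because $t_{b}-\tau_{e}\ge s$; and $\zeta$ is unchanged until the next type-1 or type-2 event, so $\zeta=s$ persists up to the first loss of this (lossy) busy period (no type-2 event can occur while the server is busy, and the first loss is the earliest candidate type-1 event). At that first loss the current server job satisfies $0<t_{a}-t_{0}<s$, so condition (iii), $s>s-(t_{a}-t_{0})\ge 0$, holds and the loss is a type-1 event. Hence at least $N_{\ell}=N-N_{s}$ distinct busy periods carry a type-1 event, giving $\sigma(k)\ge N_{\ell}$. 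Combining with $\sigma(k)\le N$ yields $0\le N-\sigma(k)\le N_{s}$, and the display above then gives $E(k)\le sN_{s}$.

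The main obstacle I anticipate is the careful bookkeeping of $\zeta$ and $\psi$ across the possibly several idle/busy periods separating consecutive type-1 events, needed to justify the equivalence ``saved loss $=$ type-1 event'' at \emph{every} loss instant rather than only at the first loss of a regeneration cycle, and to confirm that the value of $\zeta$ carried over from a short idle period is exactly what can push an early loss into the $\Delta x(k,\cdot)=1$ portion of a service period so that it fails to save a job (this is precisely why the shortfall is charged to $N_{s}$). A secondary technicality is the treatment of a busy period truncated by the horizon $t_{f}$, and the exclusion of measure-zero coincidences (a loss exactly at $t_{a}=t_{0}$, or exactly at the switching instant $\tau+s-\zeta$), which is where the statistical-mixing assumption enters.
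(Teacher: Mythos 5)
Your proposal is correct and takes essentially the same route as the paper: your count $\sigma(k)$ of saved jobs is exactly the paper's $\tilde{N}$ (losses occurring while $\Delta x(k,t^{-})=0$), which the paper likewise identifies with the number $N_{1}$ of type-1 events via Lemma 1 and Definition 1, and your two bounds $N_{\ell}\leq\sigma(k)\leq N$ are precisely the paper's chain $N_{\ell}\leq N_{1}\leq N=N_{s}+N_{\ell}$, combined in the same way to yield $E(k)=s(N-N_{1})\leq sN_{s}$. The only differences are cosmetic (starting from $n(k)-n(k+1)$ rather than from the $\Delta x$-based count, and arguing the lower bound per busy period rather than per type-2 event), so no further comparison is needed.
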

\begin{proof}
Let $N_{1}$ denote  the number of times a type-1 event occurs during the horizon $[0,t_{f}]$.
We first show that
\begin{equation}
\Delta L(k)\ =\ -sN_{1}.
\end{equation}
Let $\tilde{N}$ be the number of times a loss occurs at a time $t$ such that
$\Delta x(k,t^{-})=0$.
Observe that $-\Delta L(k)=s\tilde{N}$. Now we argue that
$N_{1}=\tilde{N}$. The proof of this is by induction. Let $\bar{t}_{1},\bar{t}_{2},\ldots$, be the successive occurrence times
of type-1 events, and let $\tilde{t}_{1},\tilde{t}_{2},\ldots,$ be  the successive times
when a loss occurs while
$\Delta x(k,\tilde{t}_{i})=0$. Suppose that, for a given $i\geq 1$,
$\bar{t}_{i}=\tilde{t}_{i}$; we next show that
$\bar{t}_{i+1}=\tilde{t}_{i+1}$.

By  Lemma 1.1, $\Delta x(k,t)=1$ for every $t$ from
$\tilde{t}_{i}$ to the end of the busy period containing $\tilde{t}_{i}$; hence
$\tilde{t}_{i+1}$ lies in a subsequent busy period.
By Definition 1, $\psi(t)=1$ from every $t$ from $\bar{t}_{i}$ to the end of the busy
period containing $\bar{t}_{i}$; hence $\bar{t}_{i+1}$ lies in a subsequent busy period.
Since by assumption $\bar{t}_{i}=\tilde{t}_{i}$, both $\bar{t}_{i+1}$ and $\tilde{t}_{i+1}$ lie in a  busy period
subsequent to the one containing $\bar{t}_{i}$.
 Next, Lemma 1.2, with the aid of Definition 1, implies that in any subsequent busy period, a type-1 event occurs at and only at
 a time $t$ when a loss occurs while $\Delta x(k,t^{-})=0$. This establishes that $\bar{t}_{i+1}=\tilde{t}_{i+1}$, and hence that $N_{1}=\tilde{N}$. Since $-\Delta L(k)=s\tilde{N}$, Equation (10) follows.

 Observe that, by Definition 1, $\psi=1$ between any type-1 event and the following type-2 event,
 and $\psi=0$ between any type-2 event and the following type-1 event.
 Furthermore, the condition $\psi=0$ is required for a type-1 event to occur. Therefore, there can be at most a single
 type-1 event
 in a lossy busy period (according to the nominal trajectory), implying that  $N_{1}\leq N$.

 Next, consider a type-2 event occurring at a time $\tau$, and suppose that the preceding idle period was no shorter
 than $s$ seconds. By (8), $\zeta(\tau)=s$
 (since, in the notation of (8), $t-\tau_{0}\geq s$). Moreover,
 for every subsequent type-2 event occurring after $\tau$ and prior to the next loss,
 $\zeta$ will retain its value of $s$ (see (8) with the condition $\zeta\geq s$, implying that $t-\tau_{0}+\zeta\geq s$).
 Therefore, for every $t$ between $\tau$ and the next loss, $\zeta(t)=s$. Consequently, and by Definition 1.1, the next loss must be a type-1 event. We conclude that the first loss following a type-2 event whose preceding idle period is no shorter
 than $s$ second, must be a type-1 event, and this implies that $N_{\ell}\leq\ N_{1}$.

 In summary, we have that $N_{\ell}\leq N_{1}\leq N=N_{s}+N_{\ell}$. By Equation (10), $\Delta L(k)=-sN_{1}$; by Equation (4),
 $L_{c}^{'}(k)=-sN$; and consequently,
 \begin{equation}
 E(k):=|\Delta L(k)-L_{c}^{'}(k)|=s(N-N_{1})\leq sN_{s}.
 \end{equation}
This completes the proof.

\end{proof}

Similarly to (7), let  us define the relative error by
\begin{equation}
{\cal E}(k)\ :=\ \frac{E(k)}{|L_{c}^{'}(k)|}.
\end{equation}
\begin{corollary}
The following inequality holds,
\begin{equation}
{\cal E}(k)\ \leq\ \frac{N_{s}}{N}\ \leq\ 1.
\end{equation}
\end{corollary}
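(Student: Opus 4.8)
The plan is to obtain the Corollary as an immediate consequence of Proposition 1 together with the IPA formula (4), by direct substitution into the definition (12) of the relative error. No new combinatorial or inductive argument is needed; the work has already been done in establishing the chain $N_{\ell}\le N_{1}\le N=N_{s}+N_{\ell}$ and the bound $E(k)\le sN_{s}$.

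First I would record the two ingredients. By Equation (4) applied to the discrete sample path, $L_{c}^{'}(k)=-Ns$, so that $|L_{c}^{'}(k)|=sN$. By Proposition 1, $E(k)\le sN_{s}$. Substituting both into the definition ${\cal E}(k):=E(k)/|L_{c}^{'}(k)|$ yields
\begin{equation}
{\cal E}(k)\ =\ \frac{E(k)}{|L_{c}^{'}(k)|}\ \leq\ \frac{sN_{s}}{sN}\ =\ \frac{N_{s}}{N},
\end{equation}
which establishes the first inequality after cancelling the common factor $s$.

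For the second inequality I would invoke the decomposition $N_{s}+N_{\ell}=N$ stated just before Proposition 1. Since $N_{\ell}\ge 0$, we have $N_{s}=N-N_{\ell}\le N$, and dividing by $N$ gives $N_{s}/N\le 1$. Chaining the two bounds produces ${\cal E}(k)\le N_{s}/N\le 1$, as claimed.

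The argument has no real obstacle; the only point requiring care is well-definedness. The relative error (12) is meaningful only when $|L_{c}^{'}(k)|\neq 0$, i.e. when $N\ge 1$, so I would note at the outset that the statement is understood on sample paths possessing at least one lossy busy period (otherwise $\Delta L(k)=0$ and $L_{c}^{'}(k)=0$, and the question of relative error is vacuous). Under that standing hypothesis every division above is legitimate and the chain of inequalities holds verbatim.
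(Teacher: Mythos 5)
Your proof is correct and follows exactly the paper's route: the paper also derives the corollary immediately from Proposition 1 and the identity $L_{c}^{'}(k)=-sN$, with the bound $N_{s}/N\leq 1$ following from $N=N_{s}+N_{\ell}$. Your added remark on well-definedness when $N=0$ is a careful refinement the paper leaves implicit, but it does not change the argument.
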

\begin{proof}
Immediate by Proposition 1 and the fact that $L_{c}^{'}(k)=-sN$.
\end{proof}

We will use the SFM-IPA derivative $L_{c}^{'}(k)$ in a stochastic approximation algorithm applied to
the discrete system, where its parameter is the buffer size, $k$. Obviously $N_{s}/N\ \leq\ 1$, and moreover,
under broad assumptions (such as the inter-arrival times being iid),
the expected-value of $N_{s}/N$ is bounded from above by the probability that an inter-arrival time is less than
 $s$, which is less than 1 under stability conditions. Thus, Corollary 1 indicates that  $-L_{c}^{'}(k)$ is a descent direction for $L$ as long as
$L_{c}$ provides a good approximation to $L$. This   would be  the case   when the service times and inter-arrival times
are very short while their ratio is less than 1.  An example in the next section will demonstrate this point.

\section{Simulation experiments}

Consider the problem of balancing the loss volume with a buffer-cost, cast in the form of minimizing a weighted sum
of these two performance functions. Specifically, with a given $a>0$ representing the cost per unit buffer,
we consider the sample-performance function $F(k):=L(k)+ak$, and attempt to minimize its
expected-value, $f(k):=\ell(k)+ak$. To this end we employ a stochastic approximation algorithm having the following form.

Given a sequence of step sizes $\lambda_{i}>0$, $i=1,2,\ldots$, satisfying $\sum_{i=1}^{\infty}\lambda_{i}=\infty$ and
$\sum_{i=1}^{\infty}\lambda_{i}^2<\infty$.
The algorithm enters iteration $i$ with $\theta_{i}\in R$. First, it sets $k_{i}$ to be the closest integer
to $\theta_{i}$. Then it runs and observes a sample path of the queue at $k_{i}$, based on which it
computes the IPA derivative $L_{c}^{'}(k_{i})$, and the sample derivative
$F_{c}^{'}(k_{i}):=L_{c}^{'}(k_{i})+a$. It then considers the displacement term from $k_{i}$, namely the product term
$\lambda_{i}\times F_{c}^{'}(k_{i})$. It may have to scale this term if it is too large, and hence it defines, for a given
$r>0$,
\begin{equation}
d_{i}\ :=\ \left\{
\begin{array}{ll}
\lambda_{i}F_{c}^{'}(k_{i}), & {\rm if}\ |\lambda_{i}F_{c}^{'}(k_{i})|\leq r\\
r\times{\rm sign}F_{c}^{'}(k_{i}), & {\rm if}\ |\lambda_{i}F_{c}^{'}(k_{i})|>r.
\end{array}
\right.
\end{equation}
Finally, the algorithm sets $\theta_{i+1}:=\theta_{i}-d_{i}$.

Note that the principal output of the algorithm is the sequence of
integers $\{k_{i}\}$, but it computes them by iterating on the sequence $\{\theta_{i}\}$ of
real numbers. The reason is that whereas we compute the IPA at the integer-values of the buffer size, letting the algorithm
compute its iterations only among such values may cause it to jam due to the fact that the step sizes decline to zero
at an a-priori rate. The use of the auxiliary variable $\theta_{i}$ basically prevents this by accumulating
fractional descents during several iterations. We also mention that the truncation in Equation (14) is made in order to prevent
large values of the IPA derivative, due to statistical fluctuations, from destabilizing the algorithm.

The system considered is an M/D/1/k queue with the arrival rate of 90 jobs-per-second and service time of $s=0.01$ seconds,
evolving over the horizon interval $t\in[0,20]$.
The algorithm was run with the following parameters:
the buffer-unit cost is $a=0.2$, the step size is $\lambda_{i}=10/i^{0.6}$, and the threshold
parameter is $r=2.5.$.

Results of a typical run for 100 iterations, with the initial value of $k=15$,  are shown in Figures 3-5.
Figure 3 depicts the graphs of the buffer size $k$ (solid curve) and the variable $\theta$  (dotted curve)
as  functions of the iteration count, $i$. We discern  a descent of $k$ from its initial value of 15 towards the values of 6 and 7; this was corroborated by
several simulation results (not shown here) verifying that $F_{c}^{'}(k):=L_{c}^{'}(k)+a$ is positive for $k\geq 7$ and negative for
$k\in\{1,\ldots,6\}$. The final value of $\theta$ is 6.49. Figures 4 and 5 show the graphs of $F(k)$ and $F_{c}^{'}(k)$, respectively, both as functions
of the iteration count, and exhibit a decline in the sample cost and an approach of its IPA derivative to 0, up to oscillations that are due to statistical fluctuations; neither graph is surprising. The second experiment starts at the initial buffer size
of $k=1$, and the convergence, more dramatic than for the first experiment, is indicated in Figures 6-8, where $k$ approaches
the 6-7 range and the final value of $\theta$ is 6.47.

\section{Conclusions}

In this paper we propose an approach to  the use of IPA in optimization of discrete-event and hybrid dynamical
systems,  which is based on a tradeoff between precision and computational efforts. Our interest in this issue is
motivated by the fact that unbiased sensitivity gradient estimation, such as FPA or IPA, may require prohibitive simulation
and computing costs, but an adequate approximation via an IPA gradient of a related (but different) model often
yields a desirable descent direction while involving considerably less computations. In particular, this can be the case in situations where the performance metric
of interest is defined on a discrete-event model, and the approximating sensitivity analysis estimate
is the IPA of a related performance metric defined on an SFM.

Following a presentation of this idea, the paper tested it on a particular example of the loss volume in a finite-buffer queue as a function
of the buffer size, which is a discrete parameter. The sensitivity analysis estimator with respect to this parameter, FPA,
is approximated by the IPA of an analogous function defined on a related SFM.
Our analysis includes the derivation of an upper bound on the relative error that is expected to yield convergence
of a stochastic approximation algorithm using the approximate IPA, and this is supported by simulation experiments.

The analysis in the paper focuses on the derivation of upper bounds on the relative errors between the
sensitivity estimators of the discrete model and those of the related SFM. This analysis was exact but rather tedious for the system
considered in this paper,  and its arguments may not be extendable to a general setting of queueing networks or DEDS.
To get around this problem we plan on pursuing an alternative approach to error analysis that is based on a-posteriori bounds  derived (easily) from the sample paths of the system, which yield sufficient information to ascertain the effectiveness of the
approximate model on the behavior of an optimization algorithm.


\begin{figure}[ht!]
\centering
\includegraphics[scale=0.50]{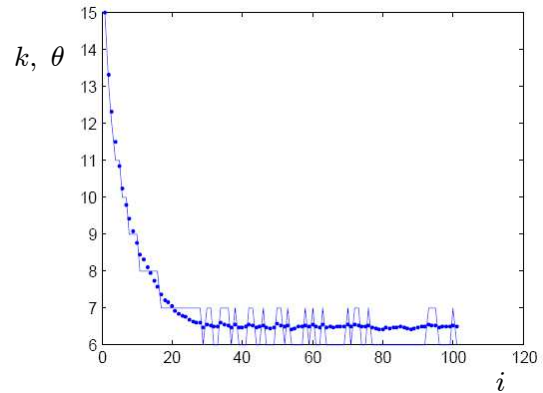}
\caption{$k$ and $\theta$ vs. iteration count ($i$), experiment 1.}
\label{fig:fig3}
\end{figure}

\begin{figure}[ht!]
\centering
\includegraphics[scale=0.48]{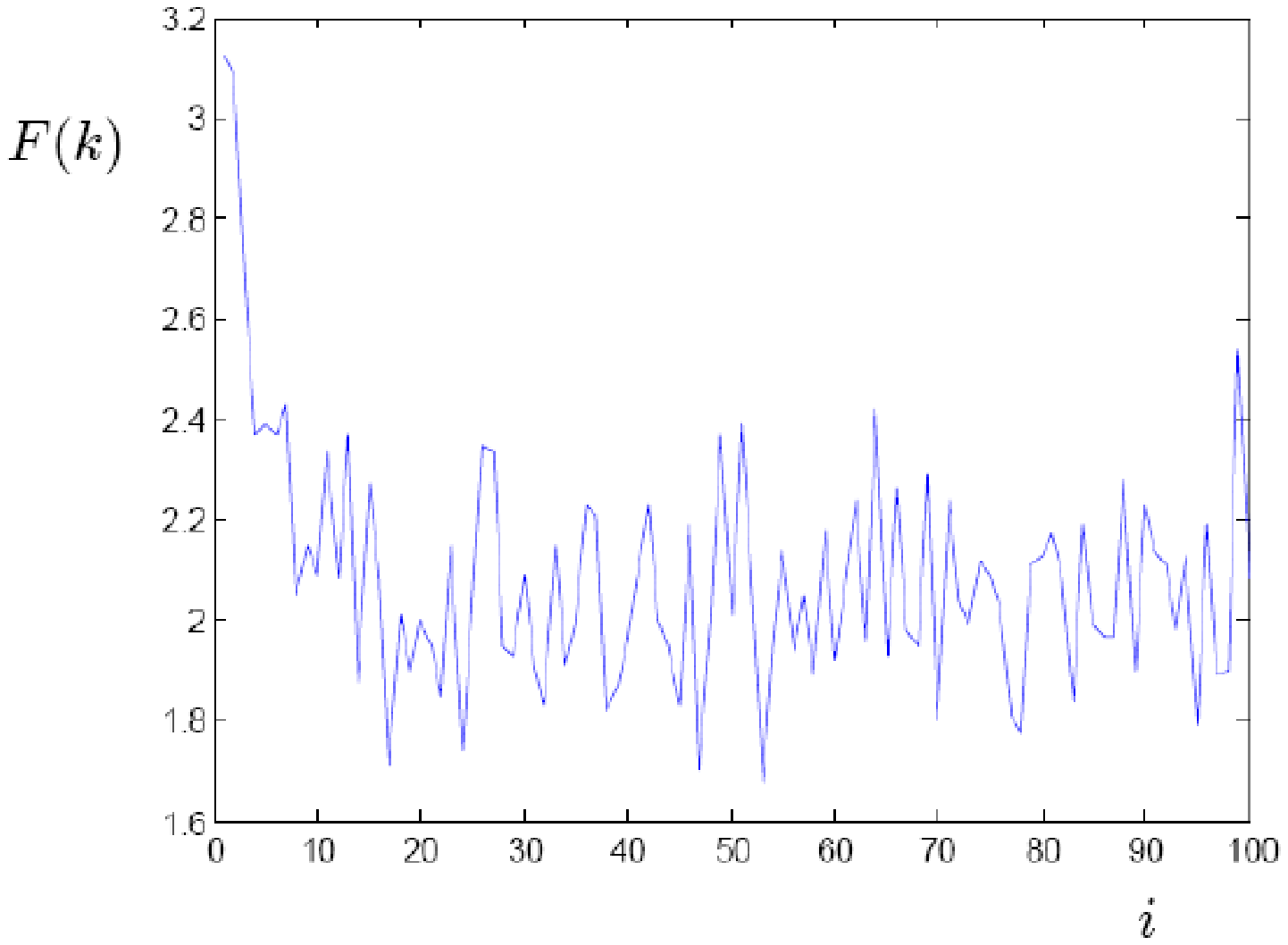}
\caption{ $F(k)$ vs.  iteration count, experiment 1.}
\label{fig:fig3}
\end{figure}

\begin{figure}[ht!]
\centering
\includegraphics[scale=0.40]{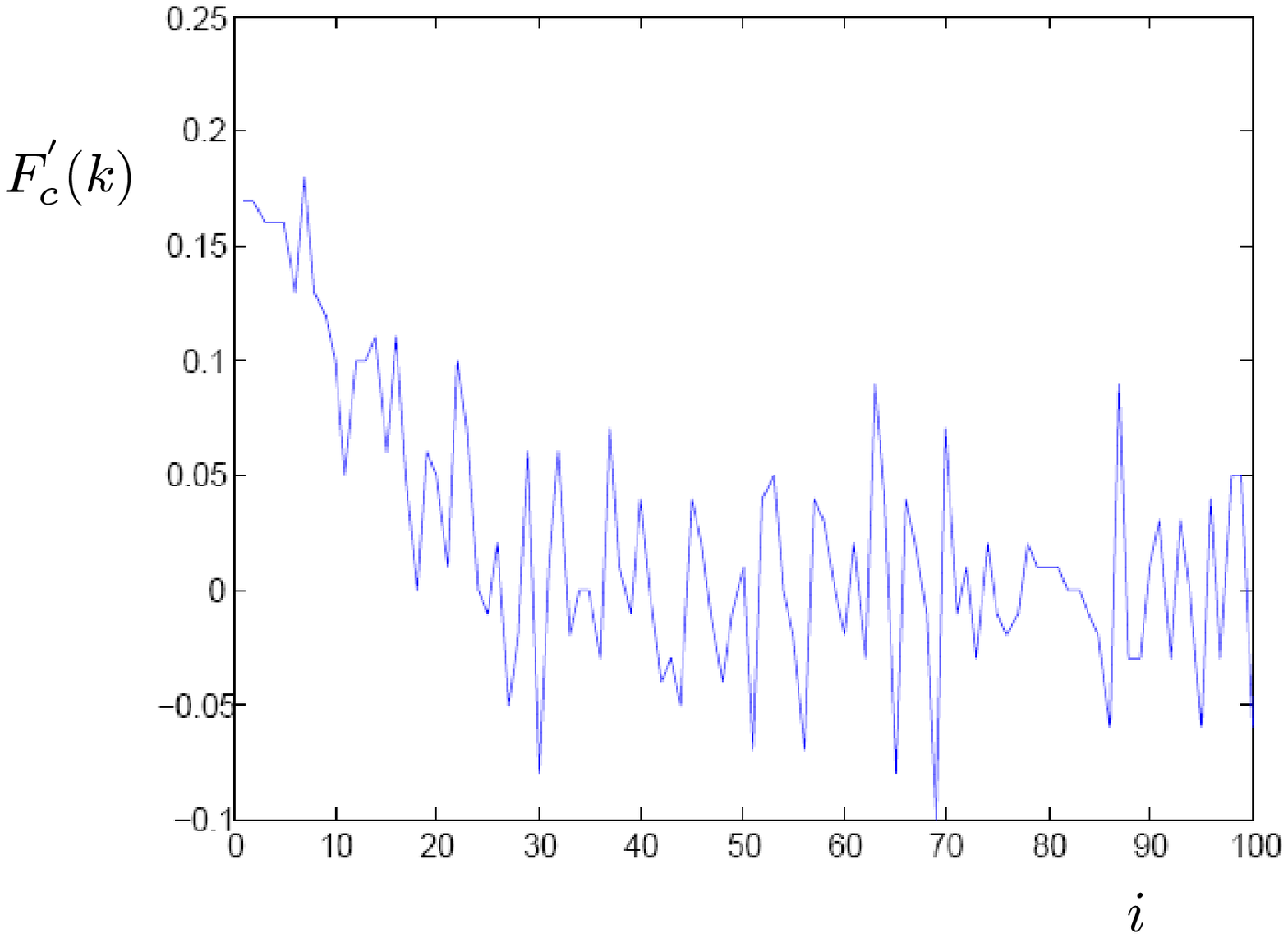}
\caption{$F_{c}^{'}(k)$ vs. iteration count, experiment 1.}
\label{fig:fig3}
\end{figure}

\begin{figure}[ht!]
\centering
\includegraphics[scale=0.40]{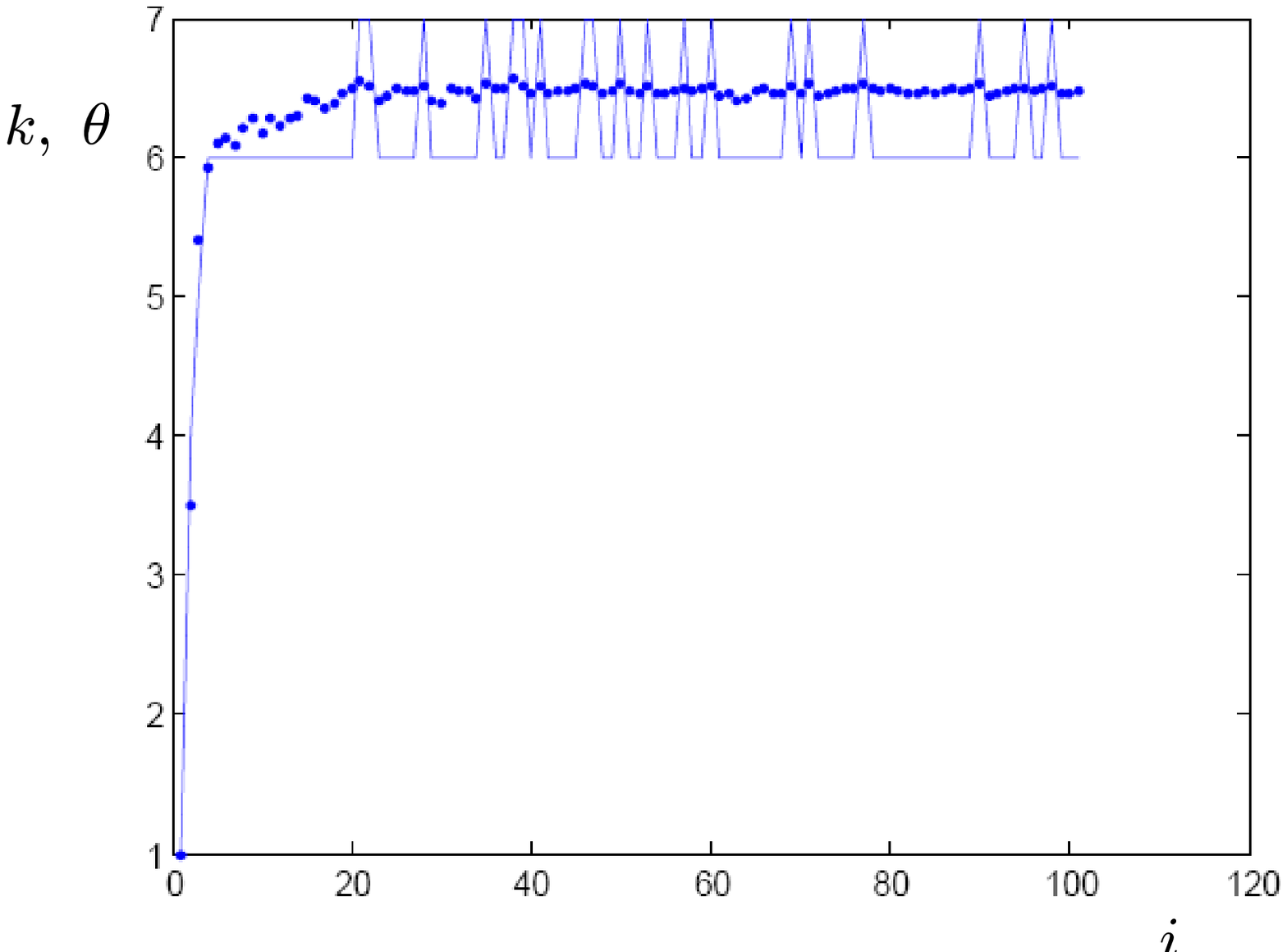}
\caption{$k$ and $\theta$ vs. iteration count, experiment 2.}
\label{fig:fig3}
\end{figure}

\begin{figure}[ht!]
\centering
\includegraphics[scale=0.39]{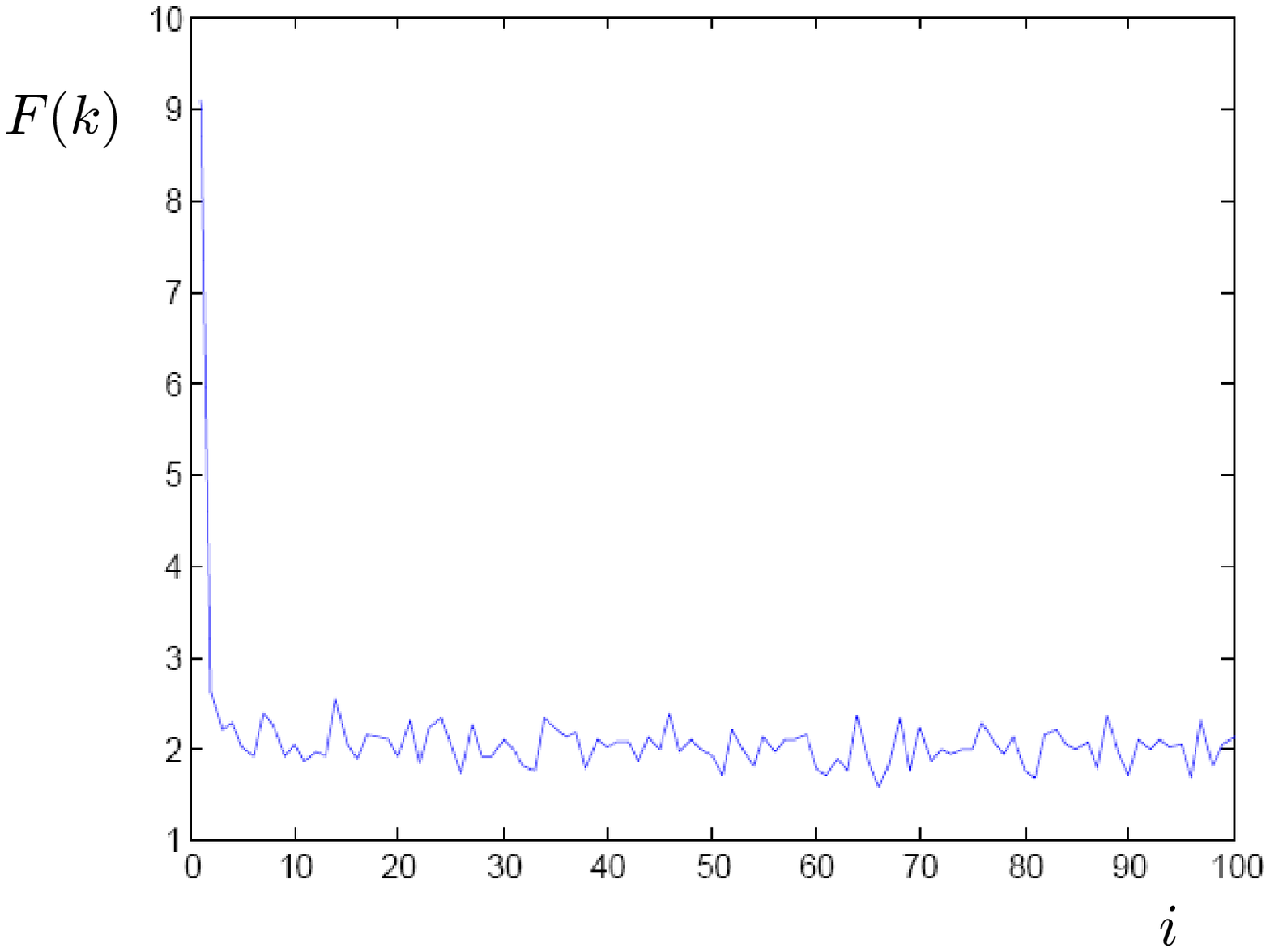}
\caption{ $F(k)$ vs.  iteration count, experiment 2.}
\label{fig:fig3}
\end{figure}

\begin{figure}[ht!]
\centering
\includegraphics[scale=0.39]{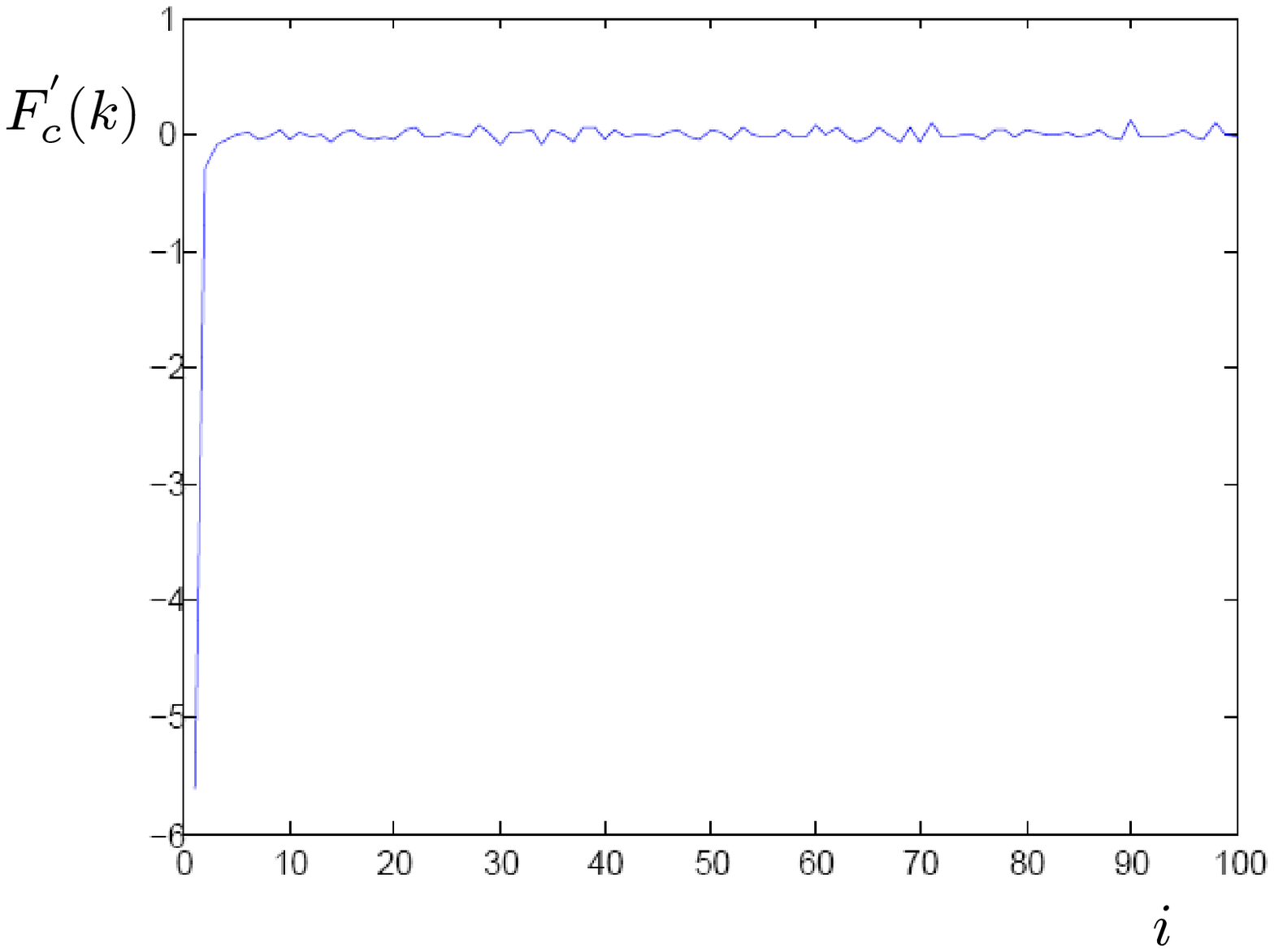}
\caption{$F_{c}^{'}(k)$ vs. iteration count, experiment 2.}
\label{fig:fig3}
\end{figure}

\end{document}